\documentclass[preprint,10pt]{elsarticle}
\usepackage{amsthm,amsmath,amssymb,mathrsfs}
\usepackage[colorlinks=true,citecolor=black,linkcolor=black,urlcolor=blue]{hyperref}
\usepackage{graphicx}
\usepackage{float}
\usepackage{epstopdf}
\usepackage{epsfig}
\usepackage{extarrows,chngpage,array,float,eqnarray}     

\journal{}
\theoremstyle{plain}
\newtheorem{theorem}{Theorem}[section]

\newtheorem{corollary}[theorem]{Corollary}

\theoremstyle{definition}
\newtheorem{definition}[theorem]{Definition}

\newtheorem{proposition}[theorem]{Proposition}

\theoremstyle{remark}
\newtheorem{remark}[theorem]{Remark}

\allowdisplaybreaks

\title{Connectivity for slice-projections of connected polymatroids}
\author{Xiaxia Guan$^{a}$,~~Xian'an Jin$^{b,c}$ \\
\small $^a$ Department of Mathematics, Taiyuan University of Technology, P. R. China\\
\small $^b$ School of Mathematical Sciences, Xiamen University, P. R. China\\
\small $^c$ School of Mathematics and Statistics, Qinghai Minzu University, P. R. China\\
\small \emph{Email addresses}: guanxiaxia@tyut.edu.cn; xajin@xmu.edu.cn}

\begin{document}
\begin{abstract}
It is well-known that deleting or contracting any element of a connected matroid always yields at least one connected minor. However, for a connected polymatroid, only two such elements can be guaranteed, proved by Hall in 2013. This note investigates the connectivity properties of slice-projections of connected polymatroids, which includes deletion and contraction. We establish that for any element of a connected polymatroid, at least one of its two consecutive slice-projections is connected. We also obtain that the $j$-th slice-projection from the top of a polymatroid and  $j$-th slice-projection from the bottom of its dual have the same connectedness. These results both extend existing connectedness theorems for graphs and matroids.
\end{abstract}

\begin{keyword}
Connectedness \sep Polymatroid\sep Slice-projection\sep Dual.
\MSC 05B35
\end{keyword}

\maketitle
\section{Introduction}
\noindent

Polymatroids constitute a profound generalization of matroids. They play an indispensable role in combinatorial optimization, matroid theory, and algebraic combinatorics.  Connectivity, as a fundamental structural invariant, quantifies the indecomposability of a combinatorial object, while connectivity-preserving operations lie at the heart of structural decomposition and minor theory.

A classical theorem of Tutte \cite{Tutte} asserts that for any element $e$ of a connected matroid $M$, at least one of $M-e$ or $M/e$ remains connected. Remarkably, this result fails to extend to the setting of polymatroids. In 2013, Hall \cite{Hall} proved that every connected polymatroid having at least  two elements has at least two elements $e$ such that either $P-e$ or $P/e$ is connected, and this bound is sharp. For further details on connectivity, refer to the literature \cite{Bonin,Oxley}. The purpose of this note is to investigate the connectivity properties of slice-projections of connected polymatroids, which includes deletion and contraction, and extend existing connectivity theorems for graphs and matroids. We first recall some necessary definitions.

\begin{definition}\label{polymatroid}
Let $E$ be a finite set (without loss of generality, we usually assume that $E=[n]=\{1,2,\ldots,n\}$) and $f:2^{E}\rightarrow \mathbb{Z}_{\geq 0}$ be a function. If $f$ satisfies the following three conditions:

\begin{enumerate}
\item[(i)] $f(\emptyset)=0$ (normalization);
\item[(ii)] $f(I)\leq f(J)$ for any $I\subseteq J\subseteq E$ (monotonicity);
\item[(iii)] $f(I)+f(J)\geq f(I\cup J)+f(I\cap J)$ for any $I,J\subseteq E$ (submodularity),
\end{enumerate}
then $f$ is called a \emph{rank function}. A \emph{polymatroid} $P\subseteq \mathbb{Z}^{n}_{\geq 0}$ on the ground set $E$ with rank function $f$ is given by
$$P=\left\{(a_{1},\ldots,a_{n})\in \mathbb{Z}^{n}_{\geq 0}~\middle|~\sum_{i\in I}a_{i}\leq f(I) \ \text{for all}\ I\subseteq E\right\}$$
and
$$\mathcal{B}_{P}=\left\{\textbf{a}=(a_{1},\ldots,a_{n})\in \mathbb{Z}^{n}_{\geq 0}~\middle|~\textbf{a}\in P \ \text{and} \ \sum_{i\in E}a_{i}=f(E)\right\}.$$
is the set of its all \emph{bases}.
\end{definition}

All independent sets of any matroid $M$ yields a polymatroid.
 More precisely, let $M\subseteq 2^{E}$ be a matroid. For any $B\subseteq E$, let
 $$I_{B}=(a_{1},\ldots,a_{n})\in \{0,1\}^{n}\ \text{where}\ a_{i}=1 \ \text{if and only if}\  i\in B.$$ Then
$$P=P(M)=\{I_{B} ~|~\text{$B$ is an independent set of} \ M\}\subseteq \{0,1\}^{n}\subseteq \mathbb{Z}^{n}_{\geq 0}$$ is the polymatoid corresponding to $M$.
\begin{definition}\label{del-con}
Let $P$ be a polymatroid on $E$ with rank function $f$. For an element $e\in E$, the \emph{deletion} $P-e$ and \emph{contraction} $P/e$, which are polymatroids on $E-e$, are given by the rank functions $f_{P-e}(T)=f(T)$ and $f_{P/e}(T)=f(T\cup e)-f(e)$, for any subset $T\subseteq E-e$, respectively.
\end{definition}

\begin{remark}
Deletion and contraction in Definition \ref{del-con} is the same as in the setting of matroids
\end{remark}

Following Mat\'{u}\v{s} \cite{Matus}, we define connected and disconnected polymatroids as follows.

\begin{definition}\label{def-co}
A polymatroid $P$ on $E$ with rank function $f$ is \emph{connected} or \emph{2-connected} if $f(I)+f(E-I) > f(E)$ for all nonempty proper subsets $I$ of $E$; otherwise, $P$ is \emph{disconnected}.
If $f(I)+f(E-I)=f(E)$, then $I$ is a \emph{separator}; it is nontrivial if $I \notin \{\emptyset, E\}$. When $I$ is a nontrivial separator, $(I,E-I)$ is called a \emph{1-separation} of $P$.
\end{definition}

\begin{remark}
Connectedness in Definition \ref{def-co} is also the same as in the setting of matroids.
\end{remark}

In \cite{Guan4}, the authors introduced slice-projections of a polymatriod and obtained its basic properties. Let $P$ be a polymatroid on $E$ with rank function $f$. For any $e\in E$, let
$$\alpha_{e}=f(E)-f(E-e),~~\beta_{e}=f(\{e\}),$$  and $$T_{e}=\{\alpha_{e}, \alpha_{e}+1,\ldots,\beta_{e}\}.$$
For any $j\in T_{e}$, let
 $$B_{P^{e}_{j}}:=\{(a_{1},\ldots,a_{n})\in B_{P}\mid a_{e}=j\},$$ and
\begin{equation*}\label{projection}
B_{\widehat{P}^{e}_{j}}:=\{(a_{1},\ldots,a_{e-1},a_{e+1},\ldots,a_{n})\in \mathbb{Z}^{n-1}\mid (a_{1},\ldots,a_{n})\in B_{P^{e}_{j}}\}.
\end{equation*}
In \cite{Guan4}, they proved that $B_{\widehat{P}^{e}_{j}}$ is the set of bases of a polymatroid on $E-e$.
\begin{definition}
Denote by $\widehat{P}^{e}_{j}$ the polymatroid whose set of bases is $B_{\widehat{P}^{e}_{j}}$, called a \emph{slice-projection} of $P$.
\end{definition}
Moreover, in \cite{Guan4}, they proved that the bottom and the top slice-projections coincide with the deletion and contraction, respectively, i.e.,
 \begin{eqnarray}\label{Del-Con}
 \widehat{P}^{e}_{\alpha_{e}}=P\setminus e\ \text{and} \ \widehat{P}^{e}_{\beta_{e}}=P/e.
\end{eqnarray}

Let $P$ be a polymatroid on $E$ with rank function $f$. If $|T_{e}|=1$ for some $e\in E$, then $(e,E-e)$ is a 1-separation of $P$. Hence,  $|T_{e}|\geq 2$ for any $e\in E$ if $P$ is connected. In this paper, we first proved:

\begin{theorem}\label{I-j}
Let $P$ be a connected polymatroid on $E$ with rank function $f$. For any $e\in E$ and any $j\in T_{e}$ satisfying $j\geq \alpha_{e}+1$, at least one of $\widehat{P}^{e}_{j}$ and $\widehat{P}^{e}_{j-1}$ is connected.
\end{theorem}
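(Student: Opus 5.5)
The plan is to reduce everything to an explicit rank function for the slice-projections, using the results of \cite{Guan4}. For $T\subseteq E-e$ and $j\in T_e$, I expect the rank function $g_j$ of $\widehat{P}^{e}_{j}$ to be
$$g_j(T)=\min\{f(T),\; f(T\cup e)-j\}.$$
This is consistent with (\ref{Del-Con}). For $j=\alpha_e$, submodularity gives $f(T\cup e)-f(T)\ge f(E)-f(E-e)=\alpha_e$, so $g_{\alpha_e}=f$. For $j=\beta_e$, submodularity gives $f(T\cup e)-f(e)\le f(T)$, so $g_{\beta_e}(T)=f(T\cup e)-f(e)$. In addition, $g_j(E-e)=f(E)-j$ because $j\ge\alpha_e$. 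I would verify the formula directly as the maximum of $\sum_{i\in T}a_i$ over bases with $a_e=j$, using the standard greedy and exchange facts for polymatroid bases. This is the first step.

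Next, suppose $(I,K)$ is a $1$-separation of $\widehat{P}^{e}_{j}$, so $K=E-e-I$ and $g_j(I)+g_j(K)=f(E)-j$. I would split into cases according to which term attains each minimum.
\begin{itemize}
\item If both minima are attained by $f$, then $f(I)+f(K)=f(E)-j$. But $f(I)+f(K)\ge f(E-e)=f(E)-\alpha_e$, which forces $j\le\alpha_e$.
\item If the minima are mixed, say $f(I)+f(K\cup e)-j=f(E)-j$, then $I$ is a nontrivial separator of $P$. This contradicts the connectedness of $P$.
\item If both minima are attained by the second term, then $f(I\cup e)+f(K\cup e)=f(E)+j$. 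Submodularity gives $f(I\cup e)+f(K\cup e)\ge f(E)+f(e)$, so this forces $j\ge\beta_e$.
\end{itemize}
Hence $\widehat{P}^{e}_{j}$ can be disconnected only if $j\in\{\alpha_e,\beta_e\}$. If both $\widehat{P}^{e}_{j}$ and $\widehat{P}^{e}_{j-1}$ are disconnected, then $j=\beta_e$ and $j-1=\alpha_e$. So $T_e=\{\alpha_e,\alpha_e+1\}$, and both $P-e$ and $P/e$ are disconnected.

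It remains to rule out this last configuration by a Tutte-type crossing argument. Let $(I,K)$ be a separation of $P-e$, so $f(I)+f(K)=f(E)-\alpha_e$. Let $(J,L)$ be a separation of $P/e$, so $f(J\cup e)+f(L\cup e)=f(E)+\alpha_e+1$. I would apply submodularity to the pairs $(I\cup e,\,J\cup e)$, $(K\cup e,\,L\cup e)$, $(I,L)$ and $(K,J)$, and to the quadrants $I\cap J$, $I\cap L$, $K\cap J$, $K\cap L$. Summing the resulting inequalities with the two separation equalities, the goal is to find a quadrant $X$, together with the appropriate choice of whether $e\in X$, for which $f(X)+f(E-X)\le f(E)$. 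Since $P$ is connected, that would be a contradiction.

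The main obstacle is exactly this final step. For general polymatroids the analogous statement is false, as Hall's examples show, so the argument must genuinely use the extra rigidity $\beta_e=\alpha_e+1$, i.e.\ $f(e)=f(E)-f(E-e)+1$. In this situation $e$ behaves almost like a matroid element, and I would mimic Tutte's proof with this slack of exactly one unit. One also needs care that the chosen quadrant is nonempty; if some quadrant is empty, then one separation refines the other, and I would handle that as a separate easy case.
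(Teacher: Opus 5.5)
Your reduction is correct and sharper than the paper's case analysis. The per-slice argument shows that $\widehat{P}^{e}_{j}$ is connected for every $\alpha_e<j<\beta_e$. That disposes of the paper's Cases 2--4 in one line each and leaves only the configuration $j=\beta_e=\alpha_e+1$ with both $P\setminus e$ and $P/e$ disconnected, which is the paper's Case 1. However, that configuration is the whole substance of the theorem, and your proposal does not prove it: you name it as the main obstacle and list candidate inequalities. The pairs you list, $(I\cup e,J\cup e)$, $(K\cup e,L\cup e)$, $(I,L)$ and $(K,J)$, also would not close the argument. They introduce $f(I\cup e)$, $f(K\cup e)$, $f(J)$ and $f(L)$, for which you only have lower bounds. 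So no sum of those inequalities can be played against the two separation equalities $f(I)+f(K)=f(E)-\alpha_e$ and $f(J\cup e)+f(L\cup e)=f(E)+\alpha_e+1$.

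The missing step is to pair $I$ with $J\cup e$ and $K$ with $L\cup e$. Adding the two separation equalities gives $f(I)+f(J\cup e)+f(K)+f(L\cup e)=2f(E)+1$; this is exactly where $\beta_e=\alpha_e+1$ enters. Since $K\cup L\cup e=E-(I\cap J)$ and $I\cup J\cup e=E-(K\cap L)$, submodularity bounds the left side below by $\bigl[f(I\cap J)+f(E-(I\cap J))\bigr]+\bigl[f(K\cap L)+f(E-(K\cap L))\bigr]$. If both $I\cap J$ and $K\cap L$ are nonempty, connectivity of $P$ makes this at least $2f(E)+2$, a contradiction. The degenerate case is not an automatic ``refinement'' case. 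If $I\cap J=\emptyset$, then $J\subseteq K$, and you need the strict inequalities $f(K\cup e)-f(K)>\alpha_e\ge f(J\cup e)-f(J)$. These come from your mixed-case exclusion: equality would make $I$, respectively $J$, a nontrivial separator of $P$. Combined with diminishing returns, $f(J\cup e)-f(J)\ge f(K\cup e)-f(K)$ for $J\subseteq K$, they give a contradiction. The case $K\cap L=\emptyset$ is symmetric, using $L\subseteq I$. This is precisely the paper's Case 1 together with its Claims 1 and 2, and your proof is incomplete without it.
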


Note that if $M$ is a connected matroid on $E$ with  rank function  $f$, then $f(e)=1$ and $f(E)=f(E-e)$  for any $e\in E$, and hence $|T_{e}|=2$. By Equality (\ref{Del-Con}), the following corollary is immediate.
\begin{corollary}\cite{Tutte}
Let $M$ be a connected matroid on $E$.  Then at least one of $M-e$ and $M/e$ is connected for any $e\in E$.
\end{corollary}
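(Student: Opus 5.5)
The plan is to first obtain an explicit rank function for each slice-projection, and then reduce the statement to a single extremal case where a Tutte-type uncrossing argument finishes it.

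\textbf{Step 1: rank function of $\widehat{P}^{e}_{j}$.} I will show that for $T\subseteq E-e$
$$g_j(T)=\min\{f(T),\, f(T\cup e)-j\}.$$
For the inequality $g_j(T)\le$ the right-hand side: any base $\mathbf a$ with $a_e=j$ satisfies $\sum_{T}a_i\le f(T)$ and $\sum_T a_i + j\le f(T\cup e)$. For equality, I will use the standard greedy/base-exchange properties of polymatroid bases to produce a base with $a_e=j$ attaining the minimum. If \cite{Guan4} already records this formula, I will simply quote it.

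As a consistency check, the formula gives $g_{\alpha_e}=f$, because $f(T\cup e)-f(T)\ge \alpha_e$ by submodularity. It also gives $g_{\beta_e}=f_{P/e}$, because $f(T\cup e)-f(T)\le \beta_e$. Both agree with (\ref{Del-Con}).

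\textbf{Step 2: a separation forces $j$ to be extreme.} Suppose $(A,B)$ is a 1-separation of $\widehat P^e_j$, so $g_j(A)+g_j(B)=g_j(E-e)=f(E)-j$. The left side equals one of four combinations of the terms in the minimum, which gives three cases.
\begin{itemize}
\item If it equals $f(A)+f(B)$, then $f(E)-j=f(A)+f(B)\ge f(E-e)=f(E)-\alpha_e$, so $j\le\alpha_e$ and hence $j=\alpha_e$.
\item If it equals $f(A\cup e)+f(B\cup e)-2j$, then submodularity gives $f(E)+j=f(A\cup e)+f(B\cup e)\ge f(E)+f(e)=f(E)+\beta_e$, so $j=\beta_e$.
\item In the mixed case $f(A)+f(B\cup e)=f(E)$, the set $A$ is a nontrivial separator of $P$. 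This contradicts connectivity.
\end{itemize}
Hence $\widehat P^e_j$ is connected whenever $\alpha_e<j<\beta_e$. If both $\widehat P^e_j$ and $\widehat P^e_{j-1}$ are disconnected, then necessarily $j=\beta_e$ and $j-1=\alpha_e$. In that case $|T_e|=2$, and the two slice-projections are exactly $P/e$ and $P\setminus e$.

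\textbf{Step 3: the case $\beta_e=\alpha_e+1$.} Here we have a separation $(A,B)$ of $P/e$ with $f(A\cup e)+f(B\cup e)=f(E)+f(e)$, and a separation $(C,D)$ of $P\setminus e$ with $f(C)+f(D)=f(E)-\alpha_e$.

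Since $A,B,C,D$ are all nonempty, either $A\cap C$ and $B\cap D$ are both nonempty, or $A\cap D$ and $B\cap C$ are both nonempty. Indeed, if $A\cap C=\emptyset$ then $C\subseteq B$ and $A\subseteq D$. By symmetry I may take $X=A\cap C\neq\emptyset$ and $Y=B\cap D\neq\emptyset$.

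I will then add the two equalities and apply submodularity twice:
\begin{itemize}
\item to the pair $A\cup e$ and $C$, giving $f(A\cup e)+f(C)\ge f(A\cup C\cup e)+f(X)$;
\item to the pair $B\cup e$ and $D$, giving $f(B\cup e)+f(D)\ge f(B\cup D\cup e)+f(Y)$.
\end{itemize}
Note that $E-X=B\cup D\cup e$ and $E-Y=A\cup C\cup e$. Combining these with the two equalities should yield
$$\bigl(f(X)+f(E-X)-f(E)\bigr)+\bigl(f(Y)+f(E-Y)-f(E)\bigr)\le f(e)-\alpha_e-1=\beta_e-\alpha_e-1=0.$$
Both bracketed terms are nonnegative, so $X$ is a nontrivial separator of $P$, which is a contradiction.

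The main obstacle is this bookkeeping in Step 3. I need the error term to come out as exactly $\beta_e-\alpha_e-1$ rather than $\beta_e-\alpha_e$. If the direct combination loses one unit, I would instead use the two mixed submodular pairs ($A\cup e$ with $D$, and $B\cup e$ with $C$). Alternatively, I would exploit $f(E-e)=f(E)-\alpha_e$ to sharpen one of the inequalities.
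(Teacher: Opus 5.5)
Your proof is correct in substance. Its heart, Step 3, is the same uncrossing as Case~1 of the paper's proof of Theorem~\ref{I-j}, with the same submodular pairs: your $A, C$ play the roles of $I, J$, so the pairs are $I\cup e$ with $J$, and $E-I$ with $E-e-J$.

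The obstacle you anticipate is not real, but your displayed bound is off by one. Adding the two separation equalities and the two submodular inequalities gives
\[
\bigl(f(X)+f(E-X)-f(E)\bigr)+\bigl(f(Y)+f(E-Y)-f(E)\bigr)\le f(e)-\alpha_e=\beta_e-\alpha_e=1,
\]
not $\le 0$. This already suffices. $X$ and $Y$ are nonempty subsets of $E-e$, hence nonempty proper subsets of $E$. Connectivity therefore makes each bracket at least $1$, and you get $2\le 1$. This is precisely the paper's contradiction between $2f(E)+2$ and $2f(E)+1$. So the correct conclusion is that $X$ or $Y$ would be a nontrivial separator, not specifically $X$. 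No mixed pairs are needed.

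The differences from the paper are organizational:
\begin{itemize}
\item The paper gets the corollary in one line from Theorem~\ref{I-j}, using $\alpha_e=0$, $\beta_e=1$ and Equality~(\ref{Del-Con}). That theorem in turn needs Claim~1, Claim~2 and four cases.
\item Your Step~2 replaces Claim~1 and Cases~2--4 by three one-line observations, and it actually proves more. For connected $P$, every $\widehat{P}^e_j$ with $\alpha_e<j<\beta_e$ is connected, so only the pair $(\alpha_e,\beta_e)$ with $\beta_e=\alpha_e+1$ needs work.
\item Your swap of $C$ and $D$, which makes $A\cap C$ and $B\cap D$ nonempty, replaces Claim~2. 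The paper instead derives that claim from the strict inequalities of Case~1.
\end{itemize}

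For the matroid statement itself, Steps~1 and~2 are superfluous. A connected matroid with $|E|\ge 2$ has no loops or coloops, so $\alpha_e=0$ and $\beta_e=1$. Step~3 uses only the deletion and contraction rank functions of Definition~\ref{del-con}, which makes it the classical direct proof of Tutte's theorem. You should say this explicitly, since your write-up never returns to $M$. State also that matroid connectivity, deletion and contraction agree with the polymatroid notions, and that the case $|E|=1$ is trivial.
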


Let $P$ be a polymatroid on $E$ with rank function $f$. Define the function $f^*$ as follows:
\begin{equation}\label{dual-rank}
f^*(I) = f(E-I) + \sum_{i \in I} f(i) - f(E) ~\text{for any}~I\subseteq E.
\end{equation}
In \cite{Jowett}, Jowett, Mo and Whittle proved that $f^*$ also satisfies the three conditions of rank function in Definition \ref{polymatroid}.
\begin{definition}
Denote $P^*$ the polymatroid given by  $f^*$, called the \emph{dual polymatroid} of $P$.
\end{definition}
Clearly, $P$ is connected if and only if $P^*$ is connected;
$f^*(E) = \sum_{i \in E} f(i) - f(E)$;
and for any  $e\in E$,
\begin{equation*}
f^*(e) = f(E-e) + f(e) - f(E),
\end{equation*}
\begin{eqnarray*}
f^*(E-e)&=&f(e)+\sum_{i \in E-e} f(i) - f(E)\\
&=& \sum_{i \in E} f(i) - f(E) \\
 &=& f^*(E).
 \end{eqnarray*}

This implies that
$\alpha^*_{e}=f^*(E)-f^*(E-e)=0$ and $\beta^*_{e}=f^*(e)=\beta_{e}- \alpha_{e}$. Define
$$T^*_{e}=\{\alpha^*_{e}, \alpha^*_{e}+1,\ldots,\beta^*_{e}\}.$$
Hence, $|T^*_{e}|=|T_{e}|=\beta_{e}- \alpha_{e}+1$. A polymatroid and its dual have the same number of slice-projections. We then obtain a slice-projection-connectedness duality theorem:
\begin{theorem}\label{dual-connected}
Let $P$ be a polymatroid on $E$ with rank function $f$. Then, for any $e\in E$ and any $j\in T^*_{e}$, the polymatroid $(\widehat{P^{*}})_{j}^{e}$ is connected if and only if $\widehat{P}^{e}_{f(e)-j}$ is connected.
\end{theorem}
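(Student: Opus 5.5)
Our plan is to show that the slice-projection $(\widehat{P^{*}})^{e}_{j}$ is, up to a translation of its bases, the dual of the slice-projection $\widehat{P}^{e}_{f(e)-j}$. The statement then follows from the already noted fact that a polymatroid is connected if and only if its dual is connected.

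We first identify the bases of $P^*$. We expect, as in Jowett--Mo--Whittle, that $\textbf{b}$ is a basis of $P^*$ if and only if $\textbf{b}=(f(1)-a_1,\ldots,f(n)-a_n)$ for some basis $\textbf{a}$ of $P$. If this is not available directly, we will verify it from the rank functions: for $\textbf{a}\in\mathcal{B}_P$, set $b_i=f(i)-a_i\ge 0$. Then
\[
\sum_{i\in I}b_i=\sum_{i\in I}f(i)-f(E)+\sum_{i\in E-I}a_i\le f^*(I),
\]
and equality holds for $I=E$. The converse is the same computation read backwards. Under this correspondence, $b_e=j$ if and only if $a_e=f(e)-j$, and $j\in T^*_e=\{0,\ldots,\beta_e-\alpha_e\}$ corresponds exactly to $f(e)-j\in T_e$. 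Hence
\[
\mathcal{B}_{(\widehat{P^{*}})^{e}_{j}}=\{\,(f(i)-a_i)_{i\ne e} : \textbf{a}'\in \mathcal{B}_{\widehat{P}^{e}_{f(e)-j}}\,\}.
\]

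Let $Q=\widehat{P}^{e}_{f(e)-j}$ on $E-e$, with rank function $g$. We compute $g$ from its bases: $g(I)$ is the maximum of $\sum_{i\in I}a_i$ over $\textbf{a}'\in\mathcal{B}_Q$. Likewise let $h$ be the rank function of $Q'=(\widehat{P^{*}})^{e}_{j}$. By the correspondence,
\[
h(I)=\sum_{i\in I}f(i)-\min_{\textbf{a}'\in\mathcal{B}_Q}\sum_{i\in I}a_i=\sum_{i\in I}f(i)-g(E-e)+g(E-e-I).
\]
Here we use that the minimum over bases of $\sum_{i\in I}a_i$ equals $g(E-e)-g(E-e-I)$, which holds for polymatroid bases. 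Connectedness of $Q'$ means $h(I)+h(J)>h(E-e)$ for every nonempty proper partition $I\sqcup J=E-e$. Substituting the formula for $h$, the terms $\sum f(i)$ cancel, and the condition becomes $g(J)+g(I)>g(E-e)$. This is precisely the connectedness of $Q$.

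The main obstacle is the bookkeeping in the last step. The relevant facts are the formula for the minimum of $\sum_{i\in I}a_i$ over bases in terms of the rank function, and the cancellation showing that a shift by $f(i)$ followed by complementation preserves the separator condition. To avoid computing $g$ explicitly, we will phrase connectedness through bases: $(I,J)$ is a separation of $Q$ if and only if $\max_{\mathcal{B}_Q}\sum_I a_i+\max_{\mathcal{B}_Q}\sum_J a_i=g(E-e)$, which is the same as saying that $\sum_I a_i$ is constant over $\mathcal{B}_Q$. That constancy is visibly invariant under the map $a_i\mapsto f(i)-a_i$, since it turns constancy of $\sum_I a_i$ into constancy of $\sum_I(f(i)-a_i)$. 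This route makes the equivalence immediate.
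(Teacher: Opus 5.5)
Your proposal is correct, and it takes a different route from the paper. The paper works entirely with rank functions. It uses the formula $f^{e}_{j}(I)=\min\{f(I),f(I\cup e)-j\}$ from Proposition~\ref{rank function of Pj} and converts the dual comparisons between $f^*(I)$ and $f^*(I\cup e)-j$ into primal ones (its Claim 1). It then checks in three cases, according to which term realizes each minimum, that $(I,E-e-I)$ satisfies the separator equation for $(\widehat{P^{*}})^{e}_{j}$ exactly when it does for $\widehat{P}^{e}_{f(e)-j}$.

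You instead argue at the level of bases. The reflection $a_i\mapsto f(i)-a_i$ carries $\mathcal{B}_P$ onto $\mathcal{B}_{P^*}$ and carries the slice $a_e=f(e)-j$ onto the slice $b_e=j$. A partition $(I,J)$ of $E-e$ is a separator exactly when $\sum_{i\in I}a_i$ is constant over the bases, and this property is visibly preserved by the reflection. This avoids Proposition~\ref{rank function of Pj} and the case split.

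Your route also gives more information. Your formula $h(I)=\sum_{i\in I}f(i)-g(E-e)+g(E-e-I)$ agrees with Proposition~\ref{rank function of Pj}, as one can check. It says that $(\widehat{P^{*}})^{e}_{j}$ differs from the dual of $\widehat{P}^{e}_{f(e)-j}$ only by the modular term $\sum_{i\in I}(f(i)-g(i))$. This explains why the paper's remark that $(P/e)^*$ and $P^*\setminus e$ may differ is harmless for connectedness.

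The price is reliance on the standard fact that a polymatroid's rank function equals the maximum of $\sum_{i\in I}a_i$ over its bases. In the converse half of the base correspondence you also need $b_i\le f^*(\{i\})=f(E-i)+f(i)-f(E)\le f(i)$, so that $f(i)-b_i\ge 0$. You should state this nonnegativity explicitly rather than leave it inside ``the same computation read backwards.''
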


Let $j=\alpha^*_{e}$ and $j=\beta^*_{e}$ in Theorem \ref{dual-connected}, respectively. Then
the following results are immediate consequences of Theorem \ref{dual-connected} by Equality (\ref{Del-Con}).
 \begin{corollary}\label{dual-equal}
Let $P$ be a polymatroid on $E$ with dual polymatroid $P^*$.  Then for any $e\in E$,
\begin{enumerate}
\item [(i)] $P/e$ and $P^*\setminus e$ are equally connected;
\item [(ii)] $P\setminus e$ and $P^*/e$ are equally connected.
\end{enumerate}
\end{corollary}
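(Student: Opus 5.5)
The plan is to specialize Theorem \ref{dual-connected} to the two extreme values of $j\in T^*_{e}$ and then identify the resulting slice-projections through Equality (\ref{Del-Con}). That equality holds for every polymatroid, so it applies to $P^*$ with its own parameters $\alpha^*_{e}$ and $\beta^*_{e}$ as well as to $P$ with $\alpha_{e}$ and $\beta_{e}$. The only inputs are the values computed just before Theorem \ref{dual-connected}: $\alpha^*_{e}=0$, $\beta^*_{e}=\beta_{e}-\alpha_{e}$, and $\beta_{e}=f(e)$.

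For (i), I take $j=\alpha^*_{e}=0$, which lies in $T^*_{e}$. Applying Equality (\ref{Del-Con}) to $P^*$ gives $(\widehat{P^{*}})^{e}_{0}=(\widehat{P^{*}})^{e}_{\alpha^*_{e}}=P^*\setminus e$. On the other side, Theorem \ref{dual-connected} pairs $j=0$ with the index $f(e)-0=\beta_{e}$, so the matching slice-projection is $\widehat{P}^{e}_{\beta_{e}}=P/e$. Theorem \ref{dual-connected} therefore says $P^*\setminus e$ is connected if and only if $P/e$ is connected.

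For (ii), I take $j=\beta^*_{e}=\beta_{e}-\alpha_{e}$, which also lies in $T^*_{e}$. Equality (\ref{Del-Con}) for $P^*$ gives $(\widehat{P^{*}})^{e}_{\beta^*_{e}}=P^*/e$. The paired index is $f(e)-(\beta_{e}-\alpha_{e})=\alpha_{e}$, so the matching slice-projection is $\widehat{P}^{e}_{\alpha_{e}}=P\setminus e$. Hence $P^*/e$ is connected if and only if $P\setminus e$ is connected.

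There is no substantive obstacle. The one point to check is that Equality (\ref{Del-Con}) is used for $P^*$ with the dual parameters $\alpha^*_{e},\beta^*_{e}$ rather than $\alpha_{e},\beta_{e}$. This is legitimate because $P^*$ is a polymatroid on $E$, by the result of Jowett, Mo and Whittle cited before the definition of $P^*$.
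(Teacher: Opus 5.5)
Your proposal is correct and is the paper's own argument. You specialize Theorem \ref{dual-connected} at $j=\alpha^*_{e}=0$ and $j=\beta^*_{e}=\beta_{e}-\alpha_{e}$, which pair with $f(e)-j=\beta_{e}$ and $\alpha_{e}$ respectively, and then identify the four slice-projections as $P^*\setminus e$, $P/e$, $P^*/e$ and $P\setminus e$ using Equality (\ref{Del-Con}) for both $P^*$ and $P$.
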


\begin{remark}
$P$ and $P^*$ are equally connected. However, it is possible that $(P/e)^*\neq P^*\setminus e$ and $(P\setminus e)^*\neq P^*/e$. Hence, $(i)\nLeftrightarrow(ii)$ in Corollary \ref{dual-equal}.
\end{remark}

 \section{Proof of main results}
\noindent

In this section, we prove Theorems \ref{I-j} and \ref{dual-connected}, respectively. The following proposition in \cite{Guan4} will be used.

\begin{proposition}\label{rank function of Pj}\cite{Guan4}
Let $P$ be a polymatroid on $E$ with rank function $f$. For any $e\in E$ and $j\in T_{e}$, let $f^{e}_{j}$ be the rank function of the polymatroid $\widehat{P}^{e}_{j}$. Then for any subset $I\subseteq E-e$, we have $$f^{e}_{j}(I)=\min\{f(I), f(I\cup e)-j\}.$$
\end{proposition}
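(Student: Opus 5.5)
We prove the formula in two directions, using the standard description of the rank function of a polymatroid through its bases: for any polymatroid $Q$ with base set $\mathcal{B}_Q$, the rank of a set $I$ equals $\max\{\sum_{i\in I}a_i : \textbf{a}\in\mathcal{B}_Q\}$. Applied to $Q=\widehat{P}^{e}_{j}$, this gives
$$f^{e}_{j}(I)=\max\Bigl\{\textstyle\sum_{i\in I}a_i ~:~ \textbf{a}\in\mathcal{B}_P,\ a_e=j\Bigr\}\quad\text{for } I\subseteq E-e .$$
Write $a(X)=\sum_{i\in X}a_i$. The set over which we maximize is nonempty, since $\widehat{P}^{e}_{j}$ is a polymatroid by \cite{Guan4}. (If one wants to see this directly: there are bases with $a_e=\alpha_e$ and with $a_e=\beta_e$, and single exchanges change $a_e$ by at most one, so every $j\in T_e$ is attained.)

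\emph{Upper bound.} Let $\textbf{a}\in\mathcal{B}_P$ with $a_e=j$. Then $a(I)\le f(I)$ because $\textbf{a}\in P$. Also $a(I)+j=a(I\cup e)\le f(I\cup e)$. Hence $f^{e}_{j}(I)\le\min\{f(I),f(I\cup e)-j\}$.

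\emph{Lower bound.} Choose $\textbf{a}\in\mathcal{B}_P$ with $a_e=j$ maximizing $a(I)$, and suppose, for a contradiction, that both $a(I)<f(I)$ and $a(I\cup e)<f(I\cup e)$. Call $X\subseteq E$ \emph{tight} for $\textbf{a}$ if $a(X)=f(X)$.
\begin{itemize}
\item[(1)] By submodularity and modularity of $a$, the union and intersection of two tight sets are tight.
\item[(2)] $E$ is tight, because $\textbf{a}$ is a base.
\end{itemize}
So for each $u$ with $a_u>0$ or $u\in I$, there is a unique smallest tight set $D(u)$ containing $u$. We need the standard exchange fact: for $u\neq k$ with $a_k>0$, the vector $\textbf{a}+\chi_u-\chi_k$ is a base if and only if $k\in D(u)$. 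Indeed, $\textbf{a}+\chi_u-\chi_k$ violates a constraint exactly when some tight set contains $u$ but not $k$.

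Now let $U=\bigcup_{u\in I}D(u)$, which is tight by (1) and contains $I$. Two cases:
\begin{itemize}
\item If $U\subseteq I\cup e$, then $U=I$ or $U=I\cup e$. The first contradicts $a(I)<f(I)$ and the second contradicts $a(I\cup e)<f(I\cup e)$.
\item Otherwise some $u\in I$ has a $k\in D(u)\setminus(I\cup e)$. We may take $a_k>0$: if every element of $D(u)\setminus(I\cup e)$ had $a_k=0$, removing them from $D(u)$ would keep it tight, because $f$ is monotone and $a$ does not change. Then $\textbf{a}+\chi_u-\chi_k$ is a base, still has $e$-coordinate $j$, and has strictly larger sum on $I$. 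This contradicts the choice of $\textbf{a}$.
\end{itemize}
Hence $a(I)=f(I)$ or $a(I)=f(I\cup e)-j$. Combined with the upper bound, $f^{e}_{j}(I)=\min\{f(I),f(I\cup e)-j\}$.

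The main obstacle is the lower bound, specifically the tight-set exchange lemma together with the claim that removing zero-weight elements preserves tightness. The step $f(D\setminus K)\ge a(D\setminus K)=a(D)=f(D)\ge f(D\setminus K)$ needs care, and it is where monotonicity of $f$ is used. Everything else is bookkeeping with submodularity.
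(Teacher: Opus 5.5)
Your proof is correct. The paper itself gives no proof of this proposition; it is imported from \cite{Guan4}, so there is no in-paper argument to follow.

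Your write-up is a valid self-contained justification, given the two inputs you use. The first is that $\widehat{P}^{e}_{j}$ is a polymatroid, which is the result of \cite{Guan4} and is presupposed by the statement anyway. The second is that a polymatroid's rank at $I$ is attained by a base, which follows from the greedy construction with $I$ ordered first. The heart of your argument is also sound. A maximizing base in the slice must be tight on $I$ or on $I\cup e$. Otherwise the minimal tight set $D(u)$ of some $u\in I$ contains a $k\notin I\cup e$ with $a_k>0$, and the resulting exchange increases $a(I)$ while fixing the $e$-coordinate. Your monotonicity step, which rules out every such $k$ having $a_k=0$, is correct.

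For comparison, there is a route that does not depend on \cite{Guan4}. Put $g(I)=\min\{f(I),f(I\cup e)-j\}$ and check two things directly.
\begin{enumerate}
\item[(a)] $g$ is a polymatroid rank function. Normalization $g(\emptyset)=0$ uses $j\le f(e)$. Submodularity holds because $g$ is the minimum of two submodular functions whose difference $f(I\cup e)-f(I)-j$ is nonincreasing in $I$.
\item[(b)] The bases of $g$ are exactly the projections of bases of $P$ with $a_e=j$. Here $g(E-e)=f(E)-j$ uses $j\ge\alpha_e$, and both inclusions are one-line checks against the defining inequalities.
\end{enumerate}
That route proves the formula and the polymatroid property of $\widehat{P}^{e}_{j}$ at the same time. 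Yours takes the polymatroid property as given, but never has to verify submodularity of the minimum; that submodularity instead falls out a posteriori.
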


 \subsection{Proof of Theorem \ref{I-j}}
\noindent

By contradiction, assume that neither  $\widehat{P}^{e}_{j-1}$ nor $\widehat{P}^{e}_{j}$ is  connected for some $j\geq \alpha_{e}+1$. Then  by Definition \ref{def-co}, there exists 1-separation $(I,E-e-I)$ and $(J,E-e-J)$  in polymatroids $\widehat{P}^{e}_{j}$ and $\widehat{P}^{e}_{j-1}$, respectively,
that is, $$f_{j}^e(I) + f_{j}^e(E-e-I)= f_{j}^e(E-e)$$ and
 $$f_{j-1}^e(J) + f_{j-1}^e(E-e-J) = f_{j-1}^e(E-e).$$

Note that $j\geq \alpha_{e}+1$. Then, by Proposition \ref{rank function of Pj}, we have that
\begin{equation*}\label{I}
f_{j}^e(E-e) = f(E)-j ~~\text{and}~~f_{j-1}^e(E-e) = f(E)-(j-1).
\end{equation*}

Clearly, $I\neq \emptyset$, $I\neq E-e$, $J\neq \emptyset$ and $J\neq E-e$. So, it is impossible that both \(f_{j}^e(I)= f(I\cup e)-j\) and \(f_{j}^e(E-e-I)=f(E-e-I)\) hold simultaneously, and it is also impossible that both  \(f_{j}^e(I)= f(I)\) and \(f_{j}^e(E-e-I)= f(E-I)-j\) hold simultaneously. Otherwise,
\begin{equation*}
f(I\cup e) + f(E-e-I) = f(E),
\end{equation*}
or
\begin{equation*}
f(I) + f(E-I) = f(E).
\end{equation*}
This implies that $(I \cup e,E-(I \cup  e))$ or $(I ,E-I)$ is a 1-separation of $P$, which  contradicts the fact that $P$ is connected. Similarly, it is impossible that both \(f_{j-1}^e(J)= f(J\cup e)-(j-1)\) and \(f_{j-1}^e(E-e-J)=f(E-e-J)\) hold simultaneously, and it is also impossible that both   \(f_{j-1}^e(J)= f(J)\) and \(f_{j-1}^e(E-e-J)= f(E-J)-(j-1)\) hold simultaneously. This implies that the following claim holds.

\vskip0.2cm
\noindent\textbf{Claim 1.} \(f(I)\neq f(I\cup e)-j\), \(f(E-e-I) \neq f(E-I)-j\), \(f(J) \neq f(J\cup e)-(j-1)\) and \(f(E-e-J) \neq f(E-J)-(j-1)\).
\vskip0.2cm
\noindent\emph{Proof of Claim 1.}
Assume that \(f(I)= f(I\cup e)-j\). Then \(f_{j}^e(I)=f(I)= f(I\cup e)-j\).  If \(f_{j}^e(E-e-I)=f(E-e-I)\), then  both \(f_{j}^e(I)= f(I\cup e)-j\) and \(f_{j}^e(E-e-I)=f(E-e-I)\) hold simultaneously, a contradiction. If \(f_{j}^e(E-e-I)= f(E-I)-j\), then both  \(f_{j}^e(I)= f(I)\) and \(f_{j}^e(E-e-I)= f(E-I)-j\) hold simultaneously, a contradiction. Hence, \(f(I)\neq f(I\cup e)-j\). Similarly, one can show that \(f(E-e-I) \neq f(E-I)-j\), \(f(J) \neq f(J\cup e)-(j-1)\) and \(f(E-e-J) \neq f(E-J)-(j-1)\). We completes the proof of Claim 1.
 \vskip0.2cm

By Claim 1, it suffices to consider the following four cases.

\noindent\textbf{Case 1.} Suppose that \(f(I)> f(I\cup e)-j\), \(f(E-e-I) > f(E-I)-j\), \(f(J) < f(J\cup e)-(j-1)\) and \(f(E-e-J) < f(E-J)-(j-1)\).

\vskip0.2cm
Then \begin{equation}\label{I2}
 f(J\cup e)-f(J)>j-1\geq f(E-I)-f(E-e-I),
 \end{equation}
 and
 \begin{equation}\label{I1}
f(E-J)-f(E-e-J)>j-1\geq  f(I\cup e)-f(I).
\end{equation}
Moreover,
by Proposition \ref{rank function of Pj}, we have that
\begin{equation}\label{I}
f(I\cup e) + f(E-I) = f(E)+j,
\end{equation}
and
\begin{equation}\label{J-1}
f(J) + f(E-e-J) = f(E)-(j-1).
\end{equation}
By the submodularity of $f$, we have that
\begin{equation}\label{IJe}
f(I\cup e)+f(J) \geq f(I \cup J\cup e) + f(I \cap J),
\end{equation}
and
\begin{equation}\label{IJ}
f(E-I) + f(E-e-J) \geq f(E-(I \cup J\cup e)) + f(E-(I \cap J)).
\end{equation}

We now prove the following claim.
\vskip0.2cm
\noindent\textbf{Claim 2.} $I \cup J\cup e\neq E$ and $I \cap J\neq \emptyset$.
\vskip0.2cm
\noindent\emph{Proof of Claim 2.}
If $I \cup J\cup e=E$, then $ E-I\subseteq J\cup e$. By the submodularity of $f$, we have that $ f(E-I)+f(J)\geq f(E-e-I)+f(J\cup e)$, which contradicts Inequality (\ref{I2}). So, $I \cup J\cup e\neq E$. If $I \cap J=\emptyset$, then $I\cup e\subseteq E-J$. By the submodularity of $f$, we have that  $f(I\cup e)+f(E-e-J)\geq f(E-J)+f(I)$, which is in contradiction to Inequality (\ref{I1}). Thus,  $I \cap J\neq \emptyset$. We completes the proof of Claim 2.
 \vskip0.2cm
Claim 2 yields that both $(I \cup J\cup e,E-(I \cup J\cup e))$ and $(I \cap J,E-(I \cap J))$ are partitions of $E$. Hence, by Definition \ref{def-co}, we have
\begin{equation}\label{IJe1}
f(I \cup J\cup e) + f(E-(I \cup J\cup e)) \geq f(E)+1,
\end{equation}
and
\begin{equation}\label{IJ1}
f(I \cap J) + f(E-(I \cap J)) \geq f(E)+1.
\end{equation}
However, adding Inequalities (\ref{IJe}), (\ref{IJ}), (\ref{IJe1}) and (\ref{IJ1}), we have
\begin{equation*}
f(I\cup e) + f(J)+ f(E-I) + f(E-e-J)\geq 2f(E)+2.
\end{equation*}
Adding Equalities (\ref{I}) and (\ref{J-1}),
\begin{equation*}
f(I\cup e) + f(J)+ f(E-I) + f(E-e-J)=2f(E)+1,
\end{equation*}
a contradiction.
 \vskip0.2cm
\noindent\textbf{Case 2.}  Suppose that \(f(I)> f(I\cup e)-j\), \(f(E-e-I) > f(E-I)-j\), \(f(J) > f(J\cup e)-(j-1)\) and \(f(E-e-J) > f(E-J)-(j-1)\).
\vskip0.2cm
By Proposition \ref{rank function of Pj}, we have Equality (\ref{I}) and
\begin{equation}\label{J-3}
f(J \cup e) + f(E-J) = f(E)+j-1.
\end{equation}
By adding Equalities (\ref{I}) and (\ref{J-3}), and the submodularity of $f$, we have that
\begin{eqnarray*}
&&2f(E)+2j-1\\
&=& f(I \cup e) + f(E-I) +f(J \cup e) + f(E-J)\\
&=& (f(I \cup e) +f(J \cup e))+ (f(E-I) + f(E-J))\\
&\ge &(f(I \cup J \cup e) + f((I \cap J) \cup e)) + (f(E-(I \cup J)) + f(E-(I \cap J))) \\
&= &(f(I \cup J \cup e) + f(E-(I \cup J)))+(f((I \cap J) \cup e) + f(E-(I \cap J))) \\
&\ge &2f(E) + 2f(e).
\end{eqnarray*}
This implies that $j\ge \frac{1}{2}+f(e)$,  which  contradicts  $j\leq f(e)$.

 \vskip0.2cm
\noindent\textbf{Case 3.} Assume that \(f(I) < f(I\cup e)-j\), \(f(E-e-I) < f(E-I)-j\), \(f(J) < f(J\cup e)-(j-1)\) and \(f(E-e-J) < f(E-J)-(j-1)\).
\vskip0.2cm
By Proposition \ref{rank function of Pj}, we have that Equality (\ref{J-1}) and
\begin{equation}\label{I-1}
f(I) + f(E-e-I) = f(E)-j.
\end{equation}
By adding Equalities (\ref{J-1}) and (\ref{I-1}), the submodularity of $f$ and the value of $j$, we have
\begin{eqnarray*}
&&2f(E)-2j+1\\
&=&(f(I) + f(J))+ (f(E-e-I) + f(E-e-J))\\
&\ge &(f(I \cup J) + f(I \cap J))+ (f(E-e-(I \cup J)) + f(E-e-(I \cap J))) \\
&= &(f(I \cup J) +f(E-e-(I \cup J)))+ (f(I \cap J) + f(E-e-(I \cap J))) \\
 &\ge& 2f(E-e)\\
 &\ge& 2f(E)-2j+2,
 \end{eqnarray*}
a contradiction.

 \vskip0.2cm
\noindent\textbf{Case 4.} Assume that \(f(I) < f(I\cup e)-j\), \(f(E-e-I) < f(E-I)-j\), \(f(J) >f(J\cup e)-(j-1)\) and \(f(E-e-J) > f(E-J)-(j-1)\).
\vskip0.2cm
Then  Equalities (\ref{J-3})  and (\ref{I-1}) hold.
 By adding Equalities (\ref{J-3}) and (\ref{I-1}), the submodularity of $f$, we have
\begin{eqnarray*}
&&2f(E)-1\\
&=&(f(I) + f(J\cup e))+ (f(E-e-I) + f(E-J))\\
&\ge& (f(I \cup J\cup e) + f(I \cap J))+ (f(E-(e\cup I \cup J)) + f(E-(I \cap J))) \\
&\ge& (f(I \cup J\cup e) + f(E-(e\cup I \cup J)))+ (f(I \cap J)+ f(E-(I \cap J))) \\
 &\ge& 2f(E),
 \end{eqnarray*}
a contradiction.

 \subsection{Proof of Theorem \ref{dual-connected}}
\noindent
We begin with the following claim.
\vskip0.2cm
\noindent\textbf{Claim 1.} For any $j\in T^*_{e}$ and for any subset $I\subseteq E-e$, we have that \(f(I) > f(I\cup e)-j\) if and only if \(f(E-I-e)<f(E-I) -(f(e) -j)\).
\vskip0.1cm
\noindent\emph{Proof of Claim 1.}  By the definition of $f^*$ (see Equality (\ref{dual-rank})), we have
 \begin{eqnarray*}
&& f(I) > f(I\cup e)-j\\
 & \Leftrightarrow &f(E-I) + \sum_{i \in I} f(i) - f(E)>f(E-(I\cup e)) + \sum_{i \in I\cup e} f(i) - f(E)-j\\
  & \Leftrightarrow &f(E-I)>f(E-I-e)+(f(e) -j)\\
 & \Leftrightarrow &f(E-I-e)<f(E-I) -(f(e) -j).
  \end{eqnarray*}

To prove Theorem \ref{dual-connected}, it suffices to prove the following claim.
\vskip0.2cm
\noindent\textbf{Claim 2.} For any subset $I\subseteq E$, we have that $(I,E-e-I)$ is a 1-separation of the polymatroid $(\widehat{P^*})_{j}^{e}$ if and only if $(I,E-e-I)$ is also a 1-separation of the polymatroid $\widehat{P}^{e}_{f(e)-j}$.
\vskip0.1cm
\noindent\emph{Proof of Claim 2.} Let $f_{f(e)-j}^{e}$ and $(f^*)_{j}^{e}$ be the rank functions of polymatroids $\widehat{P}_{f(e)-j}^{e}$ and $(\widehat{P^*})_{j}^{e}$, respectively. By the definition of 1-separations, Proposition \ref{rank function of Pj} and the submodularity of $f$, we only need to prove that
 $$(f^*)_{j}^{e}(I) + (f^*)_{j}^{e}(E-e-I)= \sum_{i \in E} f(i) - f(E)-j$$ if and only if $$f_{f(e)-j}^{e}(I) + f_{f(e)-j}^{e}(E-e-I)= f(E)-(f(e)-j).$$
Note that the case \(f^*(I) > f^*(I\cup e)-j\) and \(f^*(E-e-I) \leq f^*(E-I)-j\), and the case \(f^*(I) \leq f^*(I\cup e)-j\) and \(f^*(E-e-I) > f^*(E-I)-j\) are symmetrical. Hence, we divide the proof of Claim 2 into three cases.
\vskip0.2cm
\noindent \textbf{Case 1.} Assume that \(f^*(I) \leq f^*(I\cup e)-j\) and \(f^*(E-e-I) \leq f^*(E-I)-j\). Then by Claim 1,
$$f(E-I-e)\geq f(E-I) -(f(e) -j) ~\text{and} ~f(I) \geq f(I\cup e)-(f(e) -j).~~~~(2-1)$$
 By Proposition \ref{rank function of Pj},
 \begin{eqnarray*}
&&(f^*)_{j}^{e}(I) + (f^*)_{j}^{e}(E-e-I)= \sum_{i \in E} f(i) - f(E)-j \\
& \Leftrightarrow & f^*(I) + f^*(E-e-I)= \sum_{i \in E} f(i) - f(E)-j. ~~~~~~~~~~~~~~~~~~~~~~~~~(2-2)
  \end{eqnarray*}
 Next by the definition of $f^*$, we have that
 \begin{eqnarray*}
(2-2) & \Leftrightarrow &\left(f(E-I) + \sum_{i \in I} f(i) - f(E)\right)  +\left( f(I\cup e) + \sum_{i \in E-e-I} f(i) - f(E)\right)\\
 & &=\sum_{i \in E} f(i) - f(E)-j\\
 & \Leftrightarrow &f(E-I) + f(I\cup e) - f(E)=  f(e)-j\\
 & \Leftrightarrow &f(E-I)-(f(e)-j) + f(I\cup e) -(f(e)-j)\\
 & & =f(E)-(f(e)-j).     ~~~~~~~~~~~~~~~~~~~~~~~~~~~~~~~~~~~~~~~~~~~~~~~~~~(2-3)
  \end{eqnarray*}
So, by  $(2-1)$ and Proposition \ref{rank function of Pj}, we have
\begin{eqnarray*}
(2-3) & \Leftrightarrow &f_{f(e)-j}^{e}(E-e-I) +f_{f(e)-j}^{e}(I)= f(E)-(f(e)-j).
  \end{eqnarray*}
\vskip0.2cm
\noindent \textbf{Case 2.} Assume that \(f^*(I) > f^*(I\cup e)-j\) and \(f^*(E-e-I) \leq f^*(E-I)-j\). Then by the definition of $f^*$,  Proposition \ref{rank function of Pj} and Claim 1, we have
\begin{eqnarray*}
&&(f^*)_{j}^{e}(I) + (f^*)_{j}^{e}(E-e-I)= \sum_{i \in E} f(i) - f(E)-j \\
& \Leftrightarrow & f^*(I\cup e)-j + f^*(E-e-I)= \sum_{i \in E} f(i) - f(E)-j~~~\text{(Proposition \ref{rank function of Pj})}\\
 & \Leftrightarrow &\left(f(E-I-e)+ \sum_{i \in I\cup e} f(i) - f(E)\right) -j + \left(f(I\cup e) + \sum_{i \in E-e-I} f(i) - f(E)\right)\\
 & &=\sum_{i \in E} f(i) - f(E)-j~~~~~~~~~~~~~~~~~~~~~~~~~~~~~~~~~~~~~~\text{(the definition of $f^*$)}\\
 & \Leftrightarrow &f(E-I-e) + f(I\cup e) - f(E)=  0\\
 & \Leftrightarrow &f(E-I-e) + f(I\cup e) -(f(e)-j)= f(E)-(f(e)-j)\\
 & \Leftrightarrow & f_{f(e)-j}^{e}(E-e-I)+f_{f(e)-j}^{e}(I)= f(E)-(f(e)-j).\\
& & ~~~~~~~~~~~~~~~~~~~~~~~~~~~~~~~~~~~~~~~~~~~~~~~~~~~~~~~\text{(Claim 1 and Proposition \ref{rank function of Pj})}
  \end{eqnarray*}
\vskip0.2cm
\noindent   \textbf{Case 3.} Assume that \(f^*(I) > f^*(I\cup e)-j\) and \(f^*(E-e-I) > f^*(E-I)-j\). Then by the definition of $f^*$,  Proposition \ref{rank function of Pj} and Claim 1, we have
\begin{eqnarray*}
&&(f^*)_{j}^{e}(I) + (f^*)_{j}^{e}(E-e-I)= \sum_{i \in E} f(i) - f(E)-j \\
& \Leftrightarrow &f^*(I\cup e)-j + f^*(E-I)-j= \sum_{i \in E} f(i) - f(E)-j~~~~\text{(Proposition \ref{rank function of Pj})}\\
 & \Leftrightarrow &\left(f(E-I-e)+ \sum_{i \in I\cup e} f(i) - f(E)\right)-j  + \left(f(I) + \sum_{i \in E-I} f(i) - f(E)\right)\\
 & &-j=\sum_{i \in E} f(i) - f(E)-j~~~~~~~~~~~~~~~~~~~~~~~~~~~~~~~~~~\text{(the definition of $f^*$)}\\
 & \Leftrightarrow &f(E-I-e) + f(e)+f(I) - f(E)-j = 0\\
 & \Leftrightarrow &f(E-I-e)+f(I)=f(E)-(f(e)-j)\\
 & \Leftrightarrow &f_{f(e)-j}^{e}(E-e-I)+f_{f(e)-j}^{e}(I)=f(E)-(f(e)-j).\\
& & ~~~~~~~~~~~~~~~~~~~~~~~~~~~~~~~~~~~~~~~~~~~~~~~~~~~~~~~\text{(Claim 1 and Proposition \ref{rank function of Pj})}
  \end{eqnarray*}

\section{Discussions}

We discuss the reductions of Theorem \ref{dual-connected} to matroids and hypergraphs. Let $M$ be a matroid on $E$ with rank function $r$. Recall that the rank function $r^*$
\begin{equation}\label{dual-rank1}
r^*(I) = r(E-I) + |I| - r(E) ~\text{for any}~I\subseteq E.
\end{equation}
 Denote $M^*$ the matroid given by  $r^*$, called the \emph{dual matroid} of $M$. Clearly, the connectivity properties of $M$ and $M^*$ are identical. Moreover,
\begin{equation*}
M^*-e = (M/e)^* \text{~and~} M^* /e = (M-e)^*.
\end{equation*}

Hence, we have the following results (see \cite{Oxley1}), as also follows from Corollary \ref{dual-equal}.
\begin{theorem}\label{dual-equal1}
For a matroid $M$ on $E$ with dual matroid $M^*$.  Then for any $e\in E$,
\begin{enumerate}
\item [(1)] $M/e$ and $M^*-e$ are equally connected;
\item [(2)] $M-e$ and $M^*/e$ are equally connected.
\end{enumerate}
\end{theorem}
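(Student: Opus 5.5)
We will derive Theorem \ref{dual-equal1} as the matroid specialization of Corollary \ref{dual-equal}, with an independent check through the classical identities $M^*-e=(M/e)^*$ and $M^*/e=(M-e)^*$.

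\emph{First route.} View $M$ as the polymatroid $P=P(M)$ with rank function $r$, so $f(i)=r(i)\in\{0,1\}$. Comparing (\ref{dual-rank}) with (\ref{dual-rank1}), $f^*$ agrees with $r^*$ exactly when $r(i)=1$ for every $i$, i.e.\ when $M$ has no loops.

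The statement then follows from two observations.
\begin{itemize}
\item By the remarks after Definitions \ref{del-con} and \ref{def-co}, deletion, contraction and connectedness for $P(M)$ coincide with the matroid notions.
\item Whenever $P^*$ is the polymatroid of $M^*$, Corollary \ref{dual-equal}(i) gives (1) and Corollary \ref{dual-equal}(ii) gives (2).
\end{itemize}

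\emph{Loops.} The obstacle is the case where $M$ has loops, since then $f^*\neq r^*$ on sets containing a loop. Here we use the direct argument instead.
\begin{itemize}
\item A matroid $N$ is connected if and only if $N^*$ is connected. This holds because $r(I)+r(E-I)-r(E)=r^*(I)+r^*(E-I)-r^*(E)$, which follows immediately from (\ref{dual-rank1}).
\item Using $M^*-e=(M/e)^*$, the matroid $M/e$ is connected if and only if $(M/e)^*=M^*-e$ is connected. This is (1).
\item Similarly, $M-e$ is connected if and only if $(M-e)^*=M^*/e$ is connected. This is (2).
\end{itemize}

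This second argument covers all matroids uniformly, so we would present it as the proof and note that Corollary \ref{dual-equal} recovers the loopless case.

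The only computation to verify is the invariance of the connectivity function under matroid duality. Expanding with (\ref{dual-rank1}), the terms $|I|+|E-I|-|E|$ cancel, and the $r(E)$ terms combine correctly, so the identity follows.
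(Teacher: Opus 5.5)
Your proof is correct, but the argument you end up presenting is not the one in the paper's proof environment. You argue directly from the duality invariance $r(I)+r(E-I)-r(E)=r^*(I)+r^*(E-I)-r^*(E)$ and the identities $M^*-e=(M/e)^*$, $M^*/e=(M-e)^*$. The paper mentions this classical route only in passing (citing Oxley). Its actual proof obtains the theorem as a specialization of Corollary~\ref{dual-equal}. That forces it to handle the discrepancy between the polymatroid dual $f^*$ and the matroid dual $r^*$, which it does in three cases:
\begin{enumerate}
\item[(a)] $M$ is loopless, so $P^*=P(M^*)$ and the corollary applies verbatim;
\item[(b)] some loop $e'\neq e$ exists, and all four minors are asserted to be disconnected, checked directly rather than through the corollary;
\item[(c)] $e$ is the unique loop, where $f^*=r^*-1$ exactly on sets containing $e$, so $P^*-e=P(M^*)-e$ and $P^*/e=P(M^*)/e$, and the corollary applies again.
\end{enumerate}
Your route is uniform and independent of the polymatroid machinery: it needs no case analysis, does not depend on Theorem~\ref{dual-connected}, and involves no small-ground-set bookkeeping. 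By contrast, the paper's case (b) silently needs $|E|\ge 3$: for $E=\{e,e'\}$ the minors on $\{e'\}$ are vacuously connected, although the conclusion still holds. The paper's route delivers what that section is really after, namely that Corollary~\ref{dual-equal} recovers the matroid theorem, including the case where $e$ is the only loop. Your proof establishes that link only for loopless $M$, because in the presence of loops you set the corollary aside rather than reconciling $f^*$ with $r^*$.
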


\begin{proof}
Note that the connectedness of a matroid and its corresponding polymatroid are identical.
Let $P=P(M)$ and $P(M^*)$ be the polymatroids corresponding to $M$ and $M^*$, respectively. Then $P-e$ and $P/e$ are the polymatroids corresponding to $M-e$ and $M/e$, respectively.

Let $r$ and $r^*$ be the rank functions of matroids $M$ and $M^*$, respectively. If $r(e')\neq 0$ for any $e'\in E$, then $P^*=P(M^*)$ by Equalities (\ref{dual-rank}) and (\ref{dual-rank1}). This implies that $P^*-e$ and $P^*/e$ are the polymatroids corresponding to $M^*-e$ and $M^*/e$, respectively. By Corollary \ref{dual-equal}, the conclusions hold.

If $r(e')= 0$ for some $e'\in E-e$, then $r^*(E-e')=r^*(E)-1$. Hence, $M/e$, $M^*-e$,  $M-e$ and $M^*/e$ are disconnected. Let $f$ and $f^*$ be the rank functions of polymatroids $P$ and $P^*$, respectively. Clearly, $f(e')=f^*(e')=0$, so, $|T^*_{e'}|=|T_{e'}|=1$. Hence, $P/e$, $P^*-e$,  $P- e$ and $P^*/e$ are also disconnected. In this case, the conclusions hold.

We now assume that $r(e)= 0$ and $r(e')\neq 0$ for any $e'\in E-e$. Then 	
\begin{equation*}\label{Tutte-DC}
				f^*(I)=\left\{\begin{array}{ll}
					r^*(I),&\text{ if } e\notin I,\\
                    r^*(I)-1,&\text{ if } e\in I.
				\end{array}\right.
			\end{equation*}
This implies that $P^*/e=P(M^*)/e$ and $P^*- e=P(M^*)-e$ by the definitions of contraction and deletion. Hence, the conclusions also hold by Corollary \ref{dual-equal}.
\end{proof}

If $P$ is a hypergraphical polymatroid, then both $\widehat{P}^{t}_{\alpha_{t}}$ and $\widehat{P}^{t}_{\beta_{t}}$ are hypergraphical. However,  maybe $\widehat{P}^{t}_{j}$ is not hypergraphical for $j\in T_{t}\setminus \{\alpha_{t}, \beta_{t}\}$. For example, if $P$ is a polymatroid induced by a hypergraph $\mathcal{H}=(V,E)$, where $V=\{1,2,3,4,5,6\}$, $E=\{e=\{1,2,3\},a=\{1,4,5\},b=\{2,4,6\},c=\{3,5,6\}\}$, then $\widehat{P}^{e}_{1}$ is not hypergraphical.
So Theorem \ref{dual-connected} can not be reduced to the setting of hypergraphs.

\section*{Acknowledgements}
\noindent

This  was supported by National Natural Science Foundation of China (No. 12401462 and 12571379), the Natural Science Foundation of Shanxi Province (No. 202403021222034) and the Shanxi Key Laboratory of Digital Design and Manufacturing (No. 202204010931025).

\section*{Declarations}
\noindent
{ \bf Conflict of interest}
The authors declare that they have no conflict of interest.


%
%
\end{document}